\newtheorem{theorem}{Theorem}
\theoremstyle{definition}
\newtheorem{definition}{Definition}
\newtheorem{remark}{Remark}
\numberwithin{equation}{section}
\begin{document}



\title{On spherical barycentric coordinates}


\author{Abdellatif AITELHAD}

\address{Departement of mathematics\\Cadi-ayyad university, FSSM, P.O. Box 2390, Marrakech 40 000, Morocco\\
abdellatif.aitelhad@edu.uca.ac.ma}

\begin{abstract}
This paper describes a novel construction of generalized barycentric coordinates of points on a sphere with respect to the vertices of a given spherical polygon that is contained in a common hemisphere. While in the standard approach such coordinates are derived from  their classical planar counterparts (e.g. Wachspress, or mean value), we instead derive them from 3D barycentric coordinates of the origin and show that they are endowed with some useful properties such as edge linearity and Lagrange property. In addition, we show that spherical mean value coordinates of both approaches coincide while their corresponding spherical wachspress coordinates are in general different. 
\end{abstract}

\begin{keyword}
coordinates, barycentric , spherical
\end{keyword}

\maketitle

\section{Introduction}
Barycentric coordinates represent a fundamental concept used in major computer graphics and geometric modeling applications such as mesh parameterization \cite{3} \cite{14}, freeform deformations \cite{15} \cite{17}, finite elements \cite{2} and shading 
\cite{7} \cite{13}.
\subsection{Generalized barycentric coordinates with respect to arbitrary polytopes}
Generalized barycentric coordinates are an extension of the notion of barycentric coordinates for simplices, to general polytopes. They being too large to be discussed in detail here, we briefly review few approaches closely related to our subject. For more details on generalized barycentric coordinates see K. Hormann and N. Sukumar  \cite{8}. The first generalizations were proposed by Wachspress \cite{19}, U. Pinkall and K. Polthier \cite{12} for convex polygons and Sibson \cite{16} for scattered sets of points. In 2003, Floater \cite{5} introduced mean value coordinates that are defined in convex and non-convex polygons. 3D extensions of Wachspress coordinates Ju et al. \cite{17}, Warren et al. \cite{20} and discrete harmonic coordinates Ju et al. \cite{9}, are well defined within convex polyhedra with triangular faces, while 3D mean value coordinates are well defined in arbitrary convex or non-convex polyhedra with triangular faces
Floater \cite{4}, Ju et al. \cite{18} and extended to arbitrary polyhedra with polygonal faces Langer et al \cite{10}.
\begin{definition}
Let P be a polytope in $\mathbb{R}^{d}$, with $n$ vertices $v_1,...,v_n$. 
The functions $\phi_i : \mathbb{R}^{d} {\longrightarrow} \mathbb{R},\; i = 1,...,n$\; are called barycentric coordinates if they satisfy 
\begin{itemize}
\item[\textbf{(a)}]\;\; \textbf{Partition of unity:}\quad $\sum_{i=1}^{n} \phi_i(x) = 1,\qquad \forall x\in P.$ 
\item[\textbf{(b)}]\;\;\textbf{Linear precision:}
$\sum_{i=1}^{n} \phi_i(x)\; v_i = x,\qquad \forall x\in P.$ 
\end{itemize}
\end{definition} 
The following additional properties are often required: 
\begin{itemize}
\item[\textbf{(c)}]\;\; \textbf{Non-negativity:} $\phi_i(x)\geqslant 0,\quad i=1,...,n,\qquad \forall x\in P$.
\item[\textbf{(d)}]\;\; \textbf{Lagrange property:}\quad
$\phi_i (v_j)  = \delta_{ij}$\\
where $\delta_{ij}$ are the Kronecker symbols. 
\item[\textbf{(e)}]\;\; \textbf{Restriction on facets of the boundary:}\\
For a facet F with vertices $v_{i1},...,v_{im}$, we have 
$$\sum_{j=1}^{m} \phi_{ij}(x)\; v_{ij} = x,\qquad \forall x\in F$$
and 
$$\forall j\notin {i_1,...,i_m},\quad \phi_j(x) = 0,\qquad \forall x\in F. $$
\item[\textbf{(f)}]\;\; \textbf{Smoothness:} 
The coordinate functions $\phi_i$ are $\mathcal{C}^{\infty}$.    
\end{itemize}
\subsection{Spherical barycentric coordinates}
Spherical barycentric coordinates represent another variant of barycentric coordinates that express a point $x$ inside an arbitrary spherical polygon $P$ as a positive linear combination of $P$'s vertices. They were studied in a spherical triangle by M\"obius \cite{11} (1846) and introduced to computer graphics by Alfeld et al \cite{1} (1996). These works are limited to triangles on the sphere or on surfaces like-sphere, where the resulting coordinates are unique because of the linear independence of the vertices. Next, Ju et al \cite{17} (2005) extended them to arbitrary convex polygons by appliying Stokes' theorem to the dual of a polyhedral cone bounded by rays whose end points are the vertices of a convex spherical polygon. These coordinates were called 'vector coordinates', and are given as ratios of areas of certain dual faces. However, they are only limited to convex polygons. 
Later, Langer et al \cite{10} (2006) developed a new construction of spherical barycentric coordinates of a point x inside an arbitrary spherical polygon P by using the gnomonic projection into the tangent plane of the sphere at x. This allowed them to construct 3D Mean Value barycentric coordinates for arbitrary, closed polygonal meshes.
In all of these constructions, the linear precision property is preserved at the cost of sacrificing the partition of unity property. 
However, research in this very promising field remains very limited.
In this work, we preserve the linear precision property with the resulting sacrifices of partition of unity. The relaxed property proposed by Alfred et al \cite{1}
\begin{equation}
\sum_{i=1}^{n} \phi_i(x) \geqslant 1,\qquad \forall x\in P
\label{moneq1} 
\end{equation}
is rather a consequence of the linear precision property than a condition, indeed 
$$x = \sum_{i=1}^{n} \phi_i(x)\; v_i$$
hence
$$1= \parallel x \parallel = \parallel \sum_{i=1}^{n} \phi_i(x)\; v_i \parallel\leqslant \sum_{i=1}^{n} \parallel \phi_i(x)\; v_i \parallel= \sum_{i=1}^{n} \phi_i(x)$$
\section{Construction}
Our goal in this section is to find barycentric coordinates with respect to spherical polygons that lie in some hemisphere.\\ 
A spherical polygon has the same definition as the planar one except that its edges are geodesics (arcs of great circles) connecting the vertices.
\begin{definition}
Let $P$ be a spherical polygon on the unit sphere centred at $0$, with  vertices $v_1,v_2,...,v_n$,\; which are ordered anti-clockwise, viewed from outside the sphere.
We call any positive values $\psi_i,\;i=1,...,n$ spherical barycentric coordinates, if they satisfy
$$\sum_{i=1}^{n} \psi_i(x)\; v_i = x,\qquad \forall x\in P.$$
\end{definition} 
Let $P$ be a spherical polygon on the unit sphere centered at $0$, with vertices $v_1,v_2,...,v_n$, cyclically indexed ($v_{i+n}=v_i$),\;and $x$ be an interior point of $P$. We consider the (non-spherical) polyhedron $Q=[ v_1, v_2,...,v_n, x, -x ]$,\;bounded by the triangular faces $[x,v_i,v_{i+1}]$\ and $[-x,v_i,v_{i+1}]$,\;$i=1,...,n$\;
(see figure~\ref{figure1}).
\begin{figure}[!h]
\includegraphics[width=14cm,height=10cm]{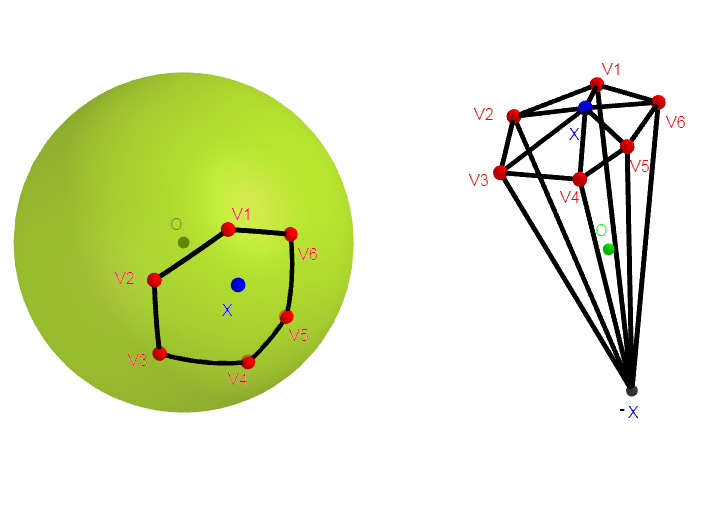}
\caption{The spherical polygon $P$ and the polyhedron $Q$}
\label{figure1}  
\end{figure}
\newpage  
Now we state the following theorem
\begin{theorem} 
Spherical barycentric coordinates, for points inside the polygon P, are given by 
\begin{equation}
\psi_i(x) = \frac{\phi_{i}(0)}{\phi_{n+2}(0) - \phi_{n+1}(0)}.
\label{moneq2}
\end{equation}
and on the boundary, by $\psi_i(v_j)=\delta_{ij}$ and
\begin{equation}
\left\{
\begin{array}{lll}
\psi_{j}(x)=\dfrac{\phi_{j}(0)}{\phi_{n+2}(0)} & \mbox{ }   \\
\psi_{j+1}(x)=\dfrac{\phi_{j+1}(0)}{\phi_{n+2}(0)}& \mbox{}  \\
\psi_k(x)=0 & \mbox{for} & k \neq j,j+1
\end{array}
\right.
\label{moneq3}
\end{equation}
where  $\phi_{i},\; i=1,...,n+2$\; are any well known 3D barycentric coordinates defined on $Q$   \\
Furthermore, the $\psi_i's$ are linear on the edges of $P$.
\end{theorem}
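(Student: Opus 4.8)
The plan is to pull the whole problem back to the centre of the sphere. The first thing to notice is that, since $x$ is an interior point of $P$ and $P$ lies in a hemisphere, the origin $0$ lies in the \emph{interior} of the polyhedron $Q=[v_1,\dots,v_n,x,-x]$: certainly $0=\tfrac12 x+\tfrac12(-x)\in Q$, and if $0$ sat on $\partial Q$ then a supporting plane $\{\langle\cdot,m\rangle=0\}$ through $0$ would have to contain both $x$ and $-x$ (their midpoint being $0$) while keeping every $v_i$ on one closed side; the great circle cut out by that plane would then support $P$ yet pass through the interior point $x$, which is impossible. Granting this, I would evaluate the two defining identities of the chosen 3D barycentric coordinates $\phi_1,\dots,\phi_{n+2}$ of $Q$ at the point $0$. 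Keeping in mind that the vertex list of $Q$ is $(v_1,\dots,v_n,x,-x)$, so that vertex $n{+}1$ is $x$ and vertex $n{+}2$ is $-x$, linear precision at $0$ reads $\sum_{i=1}^{n}\phi_i(0)v_i+\phi_{n+1}(0)x-\phi_{n+2}(0)x=0$, that is,
\[
\sum_{i=1}^{n}\phi_i(0)\,v_i=\bigl(\phi_{n+2}(0)-\phi_{n+1}(0)\bigr)\,x .
\]

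Writing $D:=\phi_{n+2}(0)-\phi_{n+1}(0)$, the definition $\psi_i(x)=\phi_i(0)/D$ of \eqref{moneq2} makes $\sum_{i=1}^{n}\psi_i(x)v_i=x$ automatic, so for interior points only two things remain: $D>0$ and $\psi_i(x)>0$. Both follow from the positivity of the standard schemes (Wachspress, mean value, discrete harmonic) on $\mathrm{int}(Q)$, which is available precisely because $0\in\mathrm{int}(Q)$: then $\phi_i(0)>0$ for every $i$, and taking the inner product of the displayed identity with a unit vector $u$ with $\langle v_i,u\rangle\ge 0$ for all $i$ and $\langle x,u\rangle>0$ (available since $P$ lies in a hemisphere) gives $0<\sum_{i=1}^{n}\phi_i(0)\langle v_i,u\rangle=D\,\langle x,u\rangle$, hence $D>0$. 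The relaxed inequality $\sum_i\psi_i(x)\ge 1$ is then, as observed in the introduction, a free consequence.

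For a boundary point $x$ in the relative interior of the spherical edge from $v_j$ to $v_{j+1}$, the geometry changes in a controlled way: the five points $v_j,v_{j+1},x,-x,0$ are coplanar (they all lie in the plane of the great circle carrying that arc), and a short arclength computation on that circle -- using that the edge arc has length strictly below $180^{\circ}$ because $P$ lies in a hemisphere -- shows that $0$ falls in the relative interior of the triangular face $[-x,v_j,v_{j+1}]$ of $Q$ (the centre of a circle lies on the far side of any chord that cuts off an arc shorter than a semicircle). Invoking the facet-restriction property (e) of the $\phi_i$ on this face gives $\phi_k(0)=0$ for all $k\neq j,j+1,n{+}2$; in particular $\phi_{n+1}(0)=0$, so $D=\phi_{n+2}(0)>0$ and the interior formula \eqref{moneq2} collapses exactly into \eqref{moneq3}. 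The Lagrange identities $\psi_i(v_j)=\delta_{ij}$ then follow by letting $x$ approach $v_j$ along an incident edge and using the linear independence of $v_j$ and its neighbour (and they are trivially consistent with linear precision, $\sum_i\delta_{ij}v_i=v_j$). Edge linearity, finally, is read off \eqref{moneq3}: on the arc $v_jv_{j+1}$ every coordinate but $\psi_j$ and $\psi_{j+1}$ vanishes, and since $v_j$ and $v_{j+1}$ are linearly independent the relation $\psi_j(x)v_j+\psi_{j+1}(x)v_{j+1}=x$ exhibits $\psi_j$ and $\psi_{j+1}$ as the restrictions to that edge of linear functions of $x$.

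The step I expect to fight with is the strict positivity of the denominator $D=\phi_{n+2}(0)-\phi_{n+1}(0)$: this is where the hemisphere hypothesis genuinely enters and where one must be certain that $0$ lies in the interior of $Q$ and not merely on the segment $[x,-x]$, since everything afterwards is bookkeeping with the partition-of-unity, linear-precision and facet-restriction axioms. A secondary irritation is the degeneration of $Q$ when $x$ is a vertex, which is why it is cleanest to obtain $\psi_i(v_j)=\delta_{ij}$ as a limit of the edge formula rather than by direct substitution.
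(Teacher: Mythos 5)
Your construction is the same as the paper's: evaluate the 3D barycentric coordinates of $Q=[v_1,\dots,v_n,x,-x]$ at the origin, use linear precision to obtain $\sum_{i=1}^{n}\phi_i(0)\,v_i=(\phi_{n+2}(0)-\phi_{n+1}(0))\,x$, divide by the difference, and handle the boundary through the facet-restriction property \textbf{(e)} on the face $[-x,v_j,v_{j+1}]$, with the Lagrange property obtained as a limit along an edge. Where you genuinely diverge is in the key step of showing $D=\phi_{n+2}(0)-\phi_{n+1}(0)>0$. The paper first gets $D\ge 0$ by a polyhedral-cone argument (the left-hand side lies in the cone spanned by the $v_i$), and then rules out $D=0$ by a chain of contradictions: $D=0$ would force $\sum_i\phi_i(0)v_i=0$, hence all $\phi_i(0)=0$ by another cone argument, hence $\phi_{n+1}(0)=\phi_{n+2}(0)=\tfrac12$ by partition of unity, which property \textbf{(e)} then shows is impossible. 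This uses only non-negativity of the $\phi_i$. Your argument instead pairs the identity with the pole $u$ of the hemisphere containing $P$ to get $D\,\langle x,u\rangle=\sum_i\phi_i(0)\langle v_i,u\rangle>0$; it is shorter and makes the role of the hemisphere hypothesis explicit, but it needs \emph{strict} positivity of the $\phi_i(0)$ in the interior of $Q$ (and at least one vertex off the bounding great circle), which is a genuine assumption on the chosen scheme rather than a consequence of properties (a)--(f) — for mean value coordinates on a non-convex $Q$ this is not automatic, so you are trading the paper's longer contradiction argument for an extra hypothesis. Your treatment of edge linearity is also more direct than the paper's case-by-case verification: you simply note that on the arc $v_jv_{j+1}$ the relation $x=\psi_j(x)v_j+\psi_{j+1}(x)v_{j+1}$ with $v_j,v_{j+1}$ linearly independent exhibits $\psi_j,\psi_{j+1}$ as coordinate functionals, hence linear; this is equivalent to (and cleaner than) the identity the paper checks. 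Both routes are sound at the paper's level of rigor.
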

\begin{proof}
\begin{enumerate}
\item The origine $0$ lies in the interior of $Q$ ($0\in [x,-x]$) and it can be written as a linear combination of the vertices $v_1,v_2,...,v_n,x,-x$ as follows
\begin{equation}
\sum_{i=1}^{n} \phi_{i}(0)\; v_{i} + \phi_{n+1}(0)\; x + \phi_{n+2}(0)\; (-x) = 0
\label{moneq4} 
\end{equation}
Hence
\begin{equation}
\sum_{i=1}^{n} \phi_{i}(0)\; v_{i} = (\phi_{n+2}(0) - \phi_{n+1}(0))\;x. 
\label{moneq5} 
\end{equation}
The point $p=\sum_{i=1}^{n} \phi_{i}(0)\;v_{i}$ on the left-hand side of ~\eqref{moneq5} belongs to the polyhedral cone $P^{\prime}$ of the vertices $v_i, i=1,...,n$. This implies that\; $\phi_{n+2}(0) - \phi_{n+1}(0)\geq 0$, since the point $q=(\phi_{n+2}(0) - \phi_{n+1}(0))\;x$ on the right-hand side would otherwise be outside of $P^{\prime}$.
We claim that\;$\phi_{n+2}(0) - \phi_{n+1}(0) \neq 0$.\;On the contrary, suppose that \;$\phi_{n+2}(0) - \phi_{n+1}(0) = 0$.\;Then we would have
$$p=\sum_{i=1}^{n} \phi_{i}(0)\; v_{i}=0$$
and therefore\;$\phi_{i}(0)=0, i=1,...,n$. Indeed, suppose there is a 
$k\in \left\{ 1,...,n \right\}$ such that $\phi_{k}(0)\neq0$, then we would have \;$- v_{k}=\sum_{i=1,i\neq k}^{n} \dfrac{ \phi_{i}(0)}{ \phi_{k}(0)}\; v_{i}$, but the point $q=\sum_{i=1,i\neq k}^{n} \dfrac{ \phi_{i}(0)}{ \phi_{k}(0)}\; v_{i}$ lies in the polyhedral cone $P^{\prime \prime}$ of the vertices $v_1,,...,v_{k-1},v_{k+1},...,v_n$, while $-v_k$ is outside of $P^{\prime \prime}$. A contradiction. Now we conclude from the partition of unity property that\; $\phi_{n+2}(0) = \phi_{n+1}(0)=\dfrac{1}{2}$. The restriction on facets of the boundary proprety $\textbf{(e)}$ shows that this is only possible if the edge\;$[x,-x]$\; coincides with an edge of $Q$, but this would imply that $x$ coincides with a certain vertex $v_k$ of $Q$ and therefore we would have $0=\phi_k(0)=\phi_{n+1}(0)= \dfrac{1}{2}$.\;A contradiction.\\
Since x is in the interior of P, equation~\eqref{moneq5} gives 
$$x = \sum_{i=1}^{n}  \frac{\phi_{i}(0)}{\phi_{n+2}(0) - \phi_{n+1}(0)}\; v_{i}$$
From\;$\phi_{i}(0)\geqslant 0$ and\; $\phi_{n+2}(0) - \phi_{n+1}(0) > 0$ we conclude that $\psi_i(x)\geqslant 0.$
We now compute these coordinates on the boundary using properties $\textbf{(d)}$ and $\textbf{(e)}$, and show that they are linear on each edge and satisfy Lagrange property.
\item \textbf{On the edges}\\
If a point $x$ inside $P$ approaches the arc $e_j$, then $0$ approaches the interior of the face (triangle) $[-x,v_j,v_{j+1}]$ of $Q$.  
The restriction on facets of the boundary proprety $\textbf{(e)}$ shows that in the limit
\begin{equation} 
\phi_{i}(0)\; = 0,\;\:for\; i\neq j,j+1,n+2
\label{moneq6}
\end{equation}
and therefore $\psi_i(x)=\dfrac{\phi_{i}(0)}{\phi_{n+2}(0)}$\;for\;$i=j,j+1$ $\quad(\phi_{n+1}(0)= 0\;see\ equation~\eqref{moneq6})$\\
where $\phi_{i}(0),\; i=1,...,n+2$\;are the continuous extensions to the boundary of the 3D barycentric coordinates used in equation~\eqref{moneq2}.  
Therefore, equation  equation~\eqref{moneq2} becomes
\begin{eqnarray}
x&=&\dfrac{\phi_{j}(0)}{\phi_{n+2}(0)}\;v_j+\dfrac{\phi_{j+1}(0)}{\phi_{n+2}(0)}\;v_{j+1}\\
&=& \psi_{j}(x)\; v_j + \psi_{j+1}(x)\; v_{j+1}
\end{eqnarray} 
where $\psi_{j}(x)=\dfrac{\phi_{j}(0)}{\phi_{n+2}(0)}$ and $\psi_{j+1}(x)=\dfrac{\phi_{j+1}(0)}{\phi_{n+2}(0)}$.\\ 
Spherical barycentric coordinates on the edge $e_j$ are therefore given by
$$
\left\{
\begin{array}{lll}
\psi_{j}(x)=\dfrac{\phi_{j}(0)}{\phi_{n+2}(0)} & \mbox{ }   \\
\psi_{j+1}(x)=\dfrac{\phi_{j+1}(0)}{\phi_{n+2}(0)}& \mbox{}  \\
\psi_k(x)=0 & \mbox{for} & k \neq j,j+1
\end{array}
\right.
$$
\item \textbf{Lagrange property}\\
Equation $(2.7)$ yields
$$\displaystyle\lim_{x \rightarrow v_i}\;x= \displaystyle\lim_{x \rightarrow v_i}\;\psi_{i}(x)\; v_i + \psi_{i+1}(x)\; v_{i+1}$$
hence
$$v_i= \displaystyle\lim_{x \rightarrow v_i}\;\psi_{i}(x)\;v_i+\displaystyle\lim_{x \rightarrow v_i}\;\psi_{i+1}(x)\; v_{i+1}$$
i.e.
$$\left(1-\displaystyle\lim_{x \rightarrow v_i}\;\psi_{i}(x)\right)\;v_i=\displaystyle\lim_{x \rightarrow v_i}\;\psi_{i+1}(x)\;v_{i+1}$$
but this means that $v_i$ and $v_{i+1}$ are collinear. A contradiction.\\
So we must have\; $1-\displaystyle\lim_{x \rightarrow v_i}\;\psi_{i}(x)=0$\; and\; $\displaystyle\lim_{x \rightarrow v_i}\;\psi_{i+1}(x)=0.$\\  
Finally, the fact that \; $\psi_j(x)=0$\; for \;$j\neq i,i+1$,\;(see\ equation~\eqref{moneq5})  
completes the proof.
\item \textbf{Linearity on the edges}\\
We have $\forall x\in [v_j,v_{j+1}],\; x=\psi_{j}(x)\; v_j + \psi_{j+1}(x)\; v_{j+1}$. To prove the linearity on the edges, it suffices to verify that
$$\psi_i \left[\psi_{j}(x)\; v_j + \psi_{j+1}(x)\; v_{j+1}\right]=\psi_{j} (x) \psi_i (v_j) + \psi_j (x) \psi_i (v_{j+1})$$ indeed
\begin{itemize}
\item for\;$i\neq j,j+1$,\; we have\;$\psi_{i} (x)=0$,\; hence
\begin{align*}
\psi_{j} (x) \psi_i (v_j) + \psi_j (x) \psi_i (v_{j+1}) &= \psi_{j} (x) \times 0 + \psi_{j} (x) \times 0\qquad (Lagrange\; property \textbf{(d)})\\
 &= 0\\
 &=\psi_{i} (x)\\
 &=\psi_i \left[\psi_{j}(x)\; v_j + \psi_{j+1}(x)\; v_{j+1}\right]\quad(x=\psi_{j}(x)\; v_j + \psi_{j+1}(x)\; v_{j+1}) 
\end{align*}
\item for $i = j$
\begin{align*}
\psi_{j} (x) \psi_i (v_j) + \psi_j (x) \psi_i (v_{j+1}) &= \psi_{j} (x) \psi_j (v_j) + \psi_j (x) \psi_j (v_{j+1})\qquad (i=j)\\ &= \psi_{j} (x) \times 1 + \psi_{j} (x) \times 0\qquad  (Lagrange\; property \textbf{(d)})\\
 &= \psi_j (x)\\
 &= \psi_i (x)\qquad (i=j)\\
 &=\psi_i \left[\psi_{j}(x)\; v_j + \psi_{j+1}(x)\; v_{j+1}\right]\quad(x=\psi_{j}(x)\; v_j + \psi_{j+1}(x)\; v_{j+1}) 
\end{align*}
\item same for $i = j+1.$
\end{itemize}
\end{enumerate}
\end{proof} 
\begin{remark}
The new coordinates are more general than the classical ones in the sens that:
\begin{enumerate}
\item Unlike in the classical approach, they are well defined inside $P$ without need of any continuous extension to the special case where the angle $\theta_i$ between $x$ and any vertex of $P$ is half of pi (i.e $\langle x ,v_i \rangle=0$)
\item The classical approach works only in the case where $\langle x ,v_i \rangle>0$ for all i, while our approach works perfectly regardless of the signs of the $\langle x ,v_i \rangle$
\item In the limit case where all vertices $v_i$ lying on a great circle $C$, our approach computes spherical barycentric coordinates of any point $x$ on the sphere not lying on $C$
\end{enumerate}
\end{remark}
\subsection{Comparison between new and existing coordinates}
We adopt the following notation:\\
NC: The new coordinates introduced above\\
CC: Spherical coordinates introduced by Langer et al \cite{10}\\
CF: Spherical coordinates introduced by Floater \cite{6}\\
MV: Mean value coordinates\\
WC: Wachspress coordinates\\ \\
\subsubsection{Mean value coordinates}
We show that NC and CC mean value coordinates coincide in the case where $\langle x ,v_i \rangle>0$ for all i.
\begin{itemize}
\item 3D mean value coordinates of \cite{4} are given, for a point x inside the kernel of a given polyhedron, by
\begin{equation}
\psi_i(x)=\dfrac{w_i(x)}{w}\quad\quad\quad \omega_i=\dfrac{1}{\parallel v_{i}-x\parallel}\;\sum_{T\in T(v_i)}\;\;\mu_{i,T}
\label{moneq7}
\end{equation}
where $\mu_{i,T}=\dfrac{\beta_{jk}+\beta_{ij}\;\langle n_{ij},n_{jk}\rangle+\beta_{ki}\;\langle n_{ki},n_{jk}\rangle}{\langle 2e_{i},n_{jk}\rangle}$\;and $\beta_{rs}$ is the angle between the two line segments $[x , v_r]$ and $[x ,v_s]$, $n_{rs}=\dfrac{e_r \times e_s}{\parallel e_r \times e_s \parallel}$, $e_i=\dfrac{v_i-x}{\parallel v_i-x \parallel}$ and $T(v_i)$ denotes the set of faces (triangles) $T$ incident to the vertex $v_i$.\\ 
Now, we consider the two faces $T1=[v_i,v_{i+1},x]$ and $T2=[v_i,-x,v_{i+1}]$ and compute $\mu_{i,T1}$ and $\mu_{i,T2}$.\\
$$\mu_{i,T1}=\dfrac{\beta_{(i+1)(n+1)}+\beta_{i(i+1)}\left\langle \dfrac{v_i \times v_{i+1}}{\parallel v_i \times v_{i+1} \parallel},\dfrac{v_{i+1} \times x}{\parallel v_{i+1} \times x \parallel}\right\rangle+\beta_{(n+1)i}\left\langle \dfrac{x \times v_{i}}{\parallel x \times v_{i} \parallel},\dfrac{v_{i+1} \times x}{\parallel v_{i+1} \times x \parallel}\right\rangle}{\left\langle 2v_i,\dfrac{v_{i+1} \times x}{\parallel v_{i+1} \times x \parallel}\right\rangle}$$
and
$$\mu_{i,T2}=\dfrac{\beta_{(n+2)(i+1)}+\beta_{i(n+1)}\left\langle \dfrac{v_i \times -x}{\parallel v_i \times -x \parallel},\dfrac{-x \times v_{i+1}}{\parallel -x \times v_{i+1} \parallel}\right\rangle+\beta_{(i+1)i}\left\langle \dfrac{ v_{i+1} \times v_{i}}{\parallel v_{i+1} \times v_{i} \parallel},\dfrac{-x \times v_{i+1}}{\parallel -x \times v_{i+1} \parallel}\right\rangle}{\left\langle 2v_i,\dfrac{-x \times v_{i+1}}{\parallel -x \times v_{i+1} \parallel}\right\rangle}$$ 
hence
$$\mu_{i,T1}=\dfrac{\theta_{i+1}+\beta_{i(i+1)}\left\langle \dfrac{v_i \times v_{i+1}}{\parallel v_i \times v_{i+1} \parallel},\dfrac{v_{i+1} \times x}{\parallel v_{i+1} \times x \parallel}\right\rangle-\theta_i \cos \alpha_i}{\left\langle 2v_i,\dfrac{v_{i+1} \times x}{\parallel v_{i+1} \times x \parallel}\right\rangle}$$
and
$$\mu_{i,T2}=\dfrac{\pi-\theta_{i+1}-(\pi-\theta_{i}) \cos \alpha_i -\beta_{i(i+1)}\left\langle \dfrac{v_i \times v_{i+1}}{\parallel v_i \times v_{i+1} \parallel},\dfrac{v_{i+1} \times x}{\parallel v_{i+1} \times x \parallel}\right\rangle}{\left\langle 2v_i,\dfrac{v_{i+1} \times x}{\parallel v_{i+1} \times x \parallel}\right\rangle}$$
where $\theta_i=\widehat{(x,v_{i})}$ and $\alpha_i= \widehat {(x \times v_{i},x \times v_{i+1})}$. Hence
\begin{equation}
\mu_{i,T1}+\mu_{i,T2}=\dfrac{\pi (1-\cos \alpha_i)}{\dfrac{\langle 2v_i,v_{i+1} \times x \rangle}{\sin \theta_{i}}}
\label{moneq8}
\end{equation}
The term ${V=\langle v_i,v_{i+1} \times x \rangle}$ in the denominator of  equation~\eqref{moneq8} is the volume of the parallelepiped determined by the vectors $v_i,v_{i+1}$ and $x$ and it is given by the formula
\begin{equation}
V=\sqrt{1+2\cos \theta_i \cos \widehat{(v_{i},v_{i+1})} \cos \theta_{i+1}- {\cos^2 \theta_i}-{\cos^2 \widehat{(v_{i},v_{i+1})}}-{\cos^2 \theta_{i+1}}}
\label{moneq9}
\end{equation}
using the identity relating the cross product to the scalar triple product
\begin{equation}
\langle a \times b , c \times d \rangle=\langle a , c \rangle \langle b , d \rangle-\langle a , d \rangle \langle b , c \rangle
\label{moneq10}
\end{equation}
we obtain
$$\langle v_{i} \times x , v_{i+1} \times x \rangle=\langle v_i , v_{i+1}\rangle-\langle v_i , x\rangle\langle v_{i+1} , x \rangle$$
i.e.
$$\parallel v_{i} \times x \parallel \parallel v_{i+1} \times x \parallel \cos \alpha_i = \cos \widehat{(v_i,v_{i+1})} - \cos \theta_i \cos \theta_{i+1}$$
therefore
$$\sin \theta_i \sin \theta_{i+1} \cos \alpha_i = \cos \widehat{(v_i,v_{i+1})} - \cos \theta_i \cos \theta_{i+1}$$
and so
\begin{equation}
\cos \widehat{(v_i,v_{i+1})}= \sin \theta_i \sin \theta_{i+1} \cos \alpha_i+ \cos \theta_i \cos \theta_{i+1}
\label{moneq11}
\end{equation}
By inserting this term into equation~\eqref{moneq9} and after a simple calculation we find
$$V=\sin \theta_i \sin \theta_{i+1}  \sin \alpha_i$$
Now, we have
$$\mu_{i,T1}+\mu_{i,T2}= \dfrac{\pi\; {\sin^2 \dfrac{\alpha_i}{2}}} 
{\dfrac{\sin \theta_i \sin \theta_{i+1}  2 \sin \dfrac{\alpha_i}{2} \cos \dfrac{\alpha_i}{2}}{\sin \theta_{i+1}}}$$
i.e.
$$\mu_{i,T1}+\mu_{i,T2}= \dfrac{\pi\; {\tan \dfrac{\alpha_i}{2}}} {2 \sin \theta_i}$$\\ 
we do the same for the faces  $T3=[v_{i},x,v_{i-1}]$ and $T4=[v_i,v_{i-1},-x]$ and we find
$$\mu_{i,T3}+\mu_{i,T4}= \dfrac{\pi\; \tan \dfrac{\alpha_{i-1}}{2}} {2 \sin \theta_i}$$   
Now, the weight of the origin $0$ with respect to the vertex $v_i$ is given by
$$\omega_i=\mu_{i,T1}+\mu_{i,T2}+\mu_{i,T3}+\mu_{i,T4}= \dfrac{\pi\; (\tan \dfrac{\alpha_i}{2}+\tan \dfrac{\alpha_{i-1}}{2})} {2 \sin \theta_i}$$
we need to compute the weights of the vertices $x$ and $-x$. In the same way we get
$$\mu_{n+2,T2}-\mu_{n+1,T1}=\dfrac{\pi\left\langle \dfrac{v_{i+1} \times x}{\parallel v_{i+1} \times x \parallel},\dfrac{v_{i} \times v_{i+1}}{\parallel v_{i} \times v_{i+1} \parallel}\right\rangle+\pi\left\langle \dfrac{x \times v_{i}}{\parallel x \times v_{i} \parallel},\dfrac{v_{i} \times v_{i+1}}{\parallel v_{i} \times v_{i+1} \parallel}\right\rangle}{\left\langle 2v_i,\dfrac{v_{i+1} \times x}{\parallel v_{i} \times  v_{i+1} \parallel}\right\rangle}$$
i.e.
$$\mu_{n+2,T2}-\mu_{n+1,T1}=\dfrac{\pi\left\langle \dfrac{v_{i+1} \times x}{\parallel v_{i+1} \times x \parallel},v_{i} \times v_{i+1}\right\rangle+\pi\left\langle \dfrac{x \times v_{i}}{\parallel x \times v_{i} \parallel},v_{i} \times v_{i+1}\right\rangle}{\left\langle 2v_i,v_{i} \times  v_{i+1}\right\rangle}.$$
By using  equations ~\eqref{moneq10} and ~\eqref{moneq11}, we find
\begin{eqnarray*}
\left\langle \dfrac{v_{i+1} \times x}{\parallel v_{i+1} \times x \parallel},v_{i} \times v_{i+1}\right\rangle &=&\dfrac{\cos \theta_{i+1} \cos \widehat{(v_i,v_{i+1})}-\cos \theta_{i}}{\sin \theta_{i+1}}\\
&=&\dfrac{\cos \theta_{i+1} \sin \theta_i \sin \theta_{i+1} \cos \alpha_i-\cos \theta_{i} \cos^2 \theta_{i+1}}{\sin \theta_{i+1}}\\
&=& \dfrac{\cos \theta_{i+1} \sin \theta_i \sin \theta_{i+1} \cos \alpha_i-\cos \theta_{i} \sin^2 \theta_{i+1}}{\sin \theta_{i+1}}\\
&=&\cos \theta_{i+1} \sin \theta_i \cos \alpha_i-\cos \theta_{i} \sin \theta_{i+1}
\end{eqnarray*}
and 
$$\left\langle \dfrac{x \times v_{i}}{\parallel x \times v_{i} \parallel},v_{i} \times v_{i+1}\right\rangle=\cos \theta_{i} \sin \theta_{i+1} \cos \alpha_i-\cos \theta_{i+1} \sin \theta_{i}$$
hence
$$\mu_{n+2,T2}-\mu_{n+1,T1}=\dfrac{\pi \cot \theta_{i+1}-\pi \cot \theta_{i+1}\cos \alpha_i+\pi \cot \theta_{i}-\pi \cot \theta_{i}\cos \alpha_i}{2\sin \alpha_i}$$
And then
\begin{eqnarray*}
\mu_{n+2,T2}-\mu_{n+1,T1}&=&\dfrac{\pi (\cot \theta_{i+1}+\cot \theta_{i})(1-\cos \alpha_i)}{2\sin \alpha_i} \\
&=&\dfrac{\pi}{2} (\cot \theta_{i+1}+\cot \theta_{i}) \tan \dfrac{\alpha_i}{2} 
\end{eqnarray*}
now
\begin{eqnarray*}
w_{n+2}-w_{n+1}&=&\sum_{i=1}^{n} \mu_{n+2,T2}-\mu_{n+1,T1}\\
&=& \sum_{i=1}^{n} \dfrac{\pi}{2} (\cot \theta_{i+1}+\cot \theta_{i}) \tan \dfrac{\alpha_i}{2}\\
&=&\dfrac{\pi}{2} \left(\sum_{i=1}^{n} \cot \theta_{i} \tan \dfrac{\alpha_i}{2}+\sum_{i=1}^{n} \cot \theta_{i+1} \tan \dfrac{\alpha_i}{2}\right)  
\end{eqnarray*}
by setting $k=i+1$ and use the fact that $v_{i+n}=v_i$, we get
\begin{eqnarray*}
w_{n+2}-w_{n+1}&=&\dfrac{\pi}{2} \left(\sum_{i=1}^{n} \cot \theta_{i} \tan \dfrac{\alpha_i}{2}+\sum_{k=2}^{n+1} \cot \theta_{k} \tan \dfrac{\alpha_{k-1}}{2}\right)\\
&=&\dfrac{\pi}{2} \sum_{i=1}^{n}\cot \theta_{i} \left(\tan \dfrac{\alpha_i}{2}+\tan \dfrac{\alpha_{i-1}}{2}\right)  
\end{eqnarray*}
The new coordinates $\psi_i(x)$ are therefore given in terms of the classical coordinates $\lambda_i(x)$ by 
\begin{eqnarray*}
\psi(x)=\frac{\phi_{i}(0)}{\phi_{n+2}(0) - \phi_{n+1}(0)}=\dfrac{w_i}{w_{n+2}-w_{n+1}}&=&\dfrac{\dfrac{\pi\; \left(\tan \dfrac{\alpha_i}{2}+\tan \dfrac{\alpha_{i-1}}{2}\right)} {2 \sin \theta_i}}{\dfrac{\pi}{2} \sum_{i=1}^{n} \cot \theta_{i} \left( \tan \dfrac{\alpha_i}{2}+\tan \dfrac{\alpha_{i-1}}{2}\right)}\\ \\
&=&\lambda_i(x)
\end{eqnarray*}
\item figure ~\ref{figure2} provides a visual example which confirms that both coordinates coincide. 
\end{itemize}
\begin{figure}[!h]
\includegraphics[width=14cm,height=10cm]{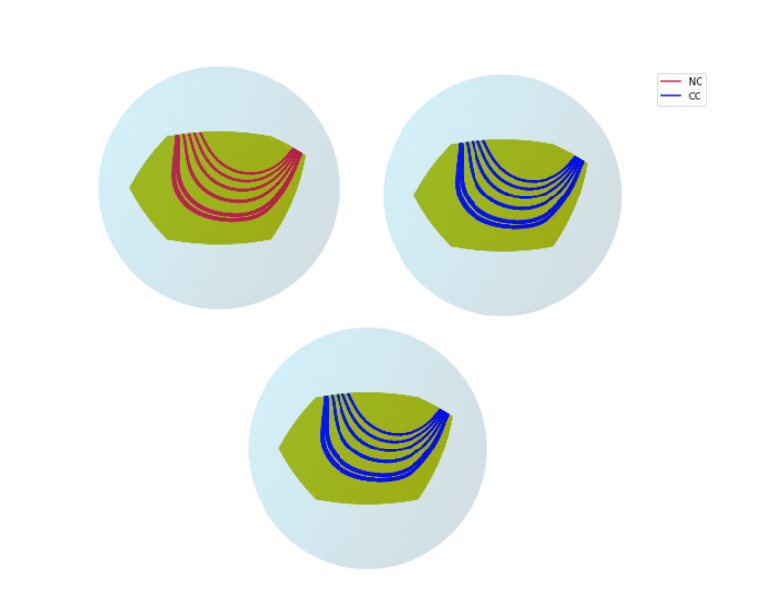}
\caption{Contour lines of the MV with respect to the vertex $v_1$ for values of $\psi_1$ in $[0.09,0.10]$, $[0.11,0.12]$, $[0.17,0.18]$, $[0.23,0.24]$, $[0.30,0.29]$ and $[0.35,0.36]$}
\label{figure2}  
\end{figure}
\subsubsection{Wachspress coordinates}
\begin{figure}[!h]
\includegraphics[width=14cm,height=10cm]{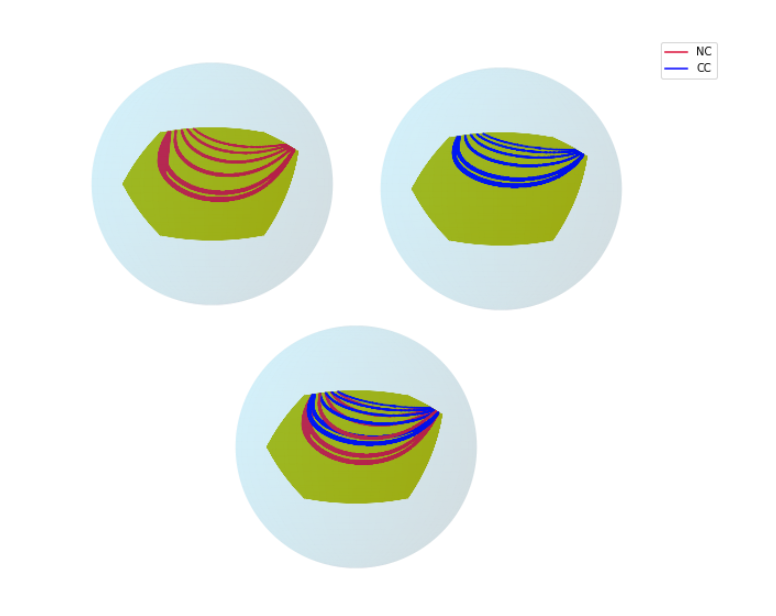}
\caption{Contour lines of the WC with respect to the vertex $v_1$ for values of $\psi_1$ in $[0.09,0.10]$, $[0.11,0.12]$, $[0.17,0.18]$, $[0.23,0.24]$, $[0.30,0.29]$ and $[0.35,0.36]$}
\label{figure3}  
\end{figure}
\newpage
Figure ~\ref{figure3} (respectively ~\ref{figure4}), provides contour lines of WC for NC and CC (respectively NC and CF). The coordinates seem to differ: we intend to give the proof of the general case in the future.
\begin{figure}[!h]
\includegraphics[width=14cm,height=10cm]{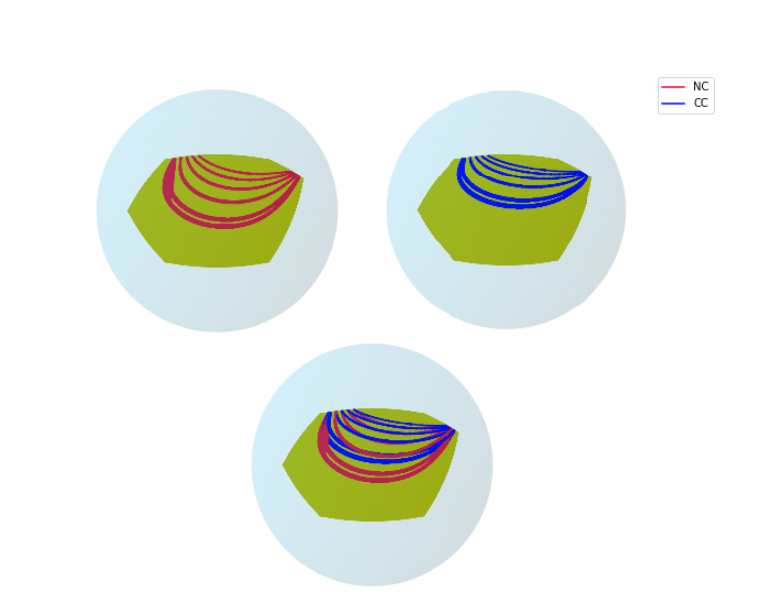}
\caption{Contour lines of the WCF with respect to the vertex $v_1$ for values of $\psi_1$ in $[0.09,0.10]$, $[0.11,0.12]$, $[0.17,0.18]$, $[0.23,0.24]$, $[0.30,0.29]$ and $[0.35,0.36]$}
\label{figure4}  
\end{figure}
\newpage
\section{Conclusion}
The new approach gives us a direct relationship between spherical barycentric coordinates and 3D barycentric coordinates via the origin $0$. Their resulting coordinates are more general than the classical ones.\\
\newpage
\bibliographystyle{elsarticle-num}
\bibliography{<your-bib-database>}

\end{document}